\newtheorem{theorem}{Theorem}[section]
\newtheorem{lemma}[theorem]{Lemma}
\newtheorem{remark}[theorem]{Remark}
\newtheorem*{question*}{Open Question}
\numberwithin{equation}{section}
\journal{Applied Mathematical Letters}
\begin{document}

\begin{frontmatter}


\ead{andrei.stan@ubbcluj.ro}

\title{Role of partial functionals in the study of variational systems}


\author{Andrei Stan}

\affiliation{organization={{Faculty of Mathematics and Computer Science, Babeș-Bolyai
University}},
            city={Cluj-Napoca},
            postcode={400084}, 
            country={Romania}}

            \affiliation{organization={Tiberiu Popoviciu
Institute of Numerical Analysis, Romanian Academy},
            city={Cluj-Napoca},
            postcode={400110}, 
            country={Romania}}
\begin{abstract}

Applying techniques originally developed for systems lacking a variational structure, we establish conditions for the existence of solutions in systems that possess this property but their energy functional is unbounded both lower and below. We show that, in general, our conditions differ from those in the classical mountain pass approach by Ambroseti-Rabinovitz when dealing with systems of this type.
Our theory is put into practice in the context of a coupled system of Stokes equations with reaction terms, where we establish sufficient conditions for the existence of a solution.
The systems under study are   intermediary between gradient-type systems and Hamiltonian systems.
\end{abstract}


\begin{keyword}
Variational method, Stokes system, Mountain pass geometry
\end{keyword}

\end{frontmatter}

 \section{Introduction and Preliminaries}

Many real-world processes can be represented by equations or systems of equations. However, solving these problems can be quite challenging. Over time, various techniques have been developed, with the critical point technique being one of the most significant. This technique is important because it simplifies the task of solving an equation to demonstrating that a specific function has a critical point. 

In the recent papers \cite{p_a_linking, p, s, s_jnfa}, systems of the form \begin{equation*}
    \begin{cases}
        E_{11}(u,v)=0 \\
        E_{22}(u,v)=0,
    \end{cases}
\end{equation*}
were considered, where $E_1, E_2$ are certain $C^1$ functionals.  Such systems have the property that they lack a variational structure as a whole but possess it individually on each component. 

In this paper we consider  systems of the form \begin{equation}\label{sistem abstract}
    \begin{cases}
        E_{u}(u,v)=0 \\
        E_{v}(u,v)=0,
    \end{cases}
\end{equation}
where $E$ is a $C^1$ functional. In the literature there are many tools to establish the existence of critical points for $E$. However, if $E$ has no upper and lower bounds, or is not well behaved, such methods may fail. Our aim is to use the techniques developed in \cite{p_a_linking, p} to prove the existence of critical points for $E$, using some partial functionals  $E_1$, $E_2$, which may not necessarily be related to $E$. 

The novelty of this paper consist in obtaining different conditions then the ones  typically used in the classical mountain pass approach by Ambroseti-Rabinovitz for the existence of a solution for the system \eqref{sistem abstract}.

Our theorey is applied to an abstract system from $H_0^1(\Omega)$ as well as a system of Stokes equations. The latter system comes in the study of fluid dynamics and it is obtained neglecting the nonlinear term from the Navier-Stokes equations, which is an Agmon-Douglis-Nirenberg elliptic and linear system. We send to \cite{galdi2011introduction, fara_p,sohr2001navierstokes} for further details.

In the following section, we will review some important results from functional analysis, matrices converging to zero, and the Stokes system. These concepts will be used in the upcoming material.
\subsection{Ekeland variational principle}

The proof of our main result (\Cref{teorema principala}) is essentially based on the weak form of
Ekeland's variational principle (see, e.g., \cite{f}).

\begin{lemma}[Ekeland Principle - weak form]
\label{ekeland} Let $(X,d)$ be a complete metric space and let $\Phi
:X\rightarrow \mathbb{R} \cup \{+\infty \}$ be a lower semicontinuous and
bounded below functional. Then, given any $\varepsilon >0$, there exists $%
u_{\varepsilon }\in X$ such that
\begin{equation*}
\Phi (u_{\varepsilon })\leq \inf_{X}\Phi +\varepsilon
\end{equation*}
and
\begin{equation*}
\Phi (u_{\varepsilon })\leq \Phi (u)+\varepsilon d(u,u_{\varepsilon }),
\end{equation*}
for all $u\in X.$
\end{lemma}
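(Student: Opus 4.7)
The plan is to adapt the standard partial-order approach to Ekeland's principle. I would introduce on $X$ the relation $u \preceq v$ iff $\Phi(u) + \varepsilon\, d(u,v) \leq \Phi(v)$. Reflexivity is trivial, antisymmetry follows by adding the two defining inequalities (which forces $d(u,v)=0$), and transitivity follows from the triangle inequality for $d$. Thus $\preceq$ is a partial order on $X$, and in particular $u \preceq v$ implies $\Phi(u) \leq \Phi(v)$.

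The proof then proceeds by constructing a $\preceq$-minimal element below some initial $u_0$ with $\Phi(u_0) \leq \inf_X \Phi + \varepsilon$, which exists because $\Phi$ is bounded below. Writing $S_k := \{u \in X : u \preceq u_k\}$, I would iteratively pick $u_{k+1} \in S_k$ so that $\Phi(u_{k+1}) \leq \inf_{S_k} \Phi + 2^{-k}$. From $u_{k+1} \preceq u_k$ one obtains $\varepsilon\, d(u_k,u_{k+1}) \leq \Phi(u_k) - \Phi(u_{k+1})$; telescoping and using that $\Phi$ is bounded below yields that the series $\sum_k d(u_k,u_{k+1})$ converges, so $(u_k)$ is Cauchy and, by completeness, converges to some $u_\varepsilon \in X$.

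The bulk of the argument is then verifying that $u_\varepsilon$ satisfies both conclusions. By transitivity $u_{k+j} \preceq u_k$ for all $j\geq 0$; passing to the limit in the defining inequality, using continuity of $d(\cdot,u_k)$ together with lower semicontinuity of $\Phi$, one deduces $u_\varepsilon \preceq u_k$ for every $k$. Taking $k=0$ gives the first assertion $\Phi(u_\varepsilon) \leq \Phi(u_0) \leq \inf_X \Phi + \varepsilon$. For the second, suppose some $v \in X$ satisfies $v \preceq u_\varepsilon$; then $v \preceq u_k$ for each $k$, so $v \in S_k$ and $\Phi(v) \geq \Phi(u_{k+1}) - 2^{-k}$. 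The sequence $\Phi(u_k)$ is decreasing and bounded below, so converges to some $L$ with $L \geq \Phi(u_\varepsilon)$ by lower semicontinuity, giving $\Phi(v) \geq \Phi(u_\varepsilon)$. Combined with $\Phi(v) + \varepsilon\, d(v,u_\varepsilon) \leq \Phi(u_\varepsilon)$, this forces $v = u_\varepsilon$. Thus $u_\varepsilon$ is $\preceq$-minimal, which is exactly the desired inequality $\Phi(u_\varepsilon) \leq \Phi(u) + \varepsilon\, d(u,u_\varepsilon)$ for every $u \in X$.

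The delicate step I expect to be the main obstacle is the passage to the limit establishing $u_\varepsilon \preceq u_k$: lower semicontinuity provides only the one-sided bound $\Phi(u_\varepsilon) \leq \liminf_j \Phi(u_j)$, and one must combine it with metric continuity of $d(u_j,u_k) \to d(u_\varepsilon, u_k)$ while preserving the composite inequality $\Phi(u_j) + \varepsilon\, d(u_j,u_k) \leq \Phi(u_k)$. The remaining pieces---verifying the partial-order axioms, the telescoping Cauchy estimate, and the final rigidity argument---are all routine manipulations of the order $\preceq$.
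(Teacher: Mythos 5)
The paper does not prove this lemma at all: it is quoted as a known result (the weak form of Ekeland's variational principle) with a citation to the literature, so there is no in-paper argument to compare against. Your proposal is the standard and correct proof via the Br\o ndsted-type ordering $u \preceq v \iff \Phi(u)+\varepsilon\, d(u,v)\leq \Phi(v)$: the order axioms, the telescoping Cauchy estimate, the lower-semicontinuity passage to the limit giving $u_\varepsilon \preceq u_k$, and the final minimality/rigidity step are all handled correctly, and you rightly single out the limit passage as the only point requiring care. The one cosmetic caveat is that antisymmetry (and the whole construction) implicitly uses that $\Phi$ is proper, i.e.\ $\Phi \not\equiv +\infty$ so that $\inf_X \Phi$ is finite and the cancellation $\Phi(u)+\Phi(v)+2\varepsilon d(u,v)\leq \Phi(u)+\Phi(v)$ is legitimate; this hypothesis is standard and harmless here, but worth stating if you write the proof out in full.
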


\subsection{Abstract linear operator}\label{definitie A}

Let $\Omega\subset \mathbb{R}^n$, $n\geq 3$, be a bounded open set. Let $A\colon H_0^1(\Omega)\to H^{-1}(\Omega)$ be a continuous and strongly monotone operator, that is, there exists $\theta>0$ such that  \begin{equation}\label{tare monotonie L}
    \langle Au,u \rangle \geq  \theta |u|_{H_0^1} ^2, \text{ for all }u\in H_0^1(\Omega).
\end{equation} Here, $ \langle \cdot,\cdot \rangle$ stand for the dual pairing between $H^{-1}(\Omega)$ and $H_0^1(\Omega)$.  
We observe that for every $h\in H^{-1}(\Omega)$, Riesz representation theorem guarantees that there exists a unique element $u_h\in H_0^1(\Omega)$ such that \begin{equation*}
    A u_h=h,
\end{equation*}
i.e., $A$ is a bijective, where $A^{-1}h=u_h$.  If $h\in L^2(\Omega)$, we have that \begin{equation*}
    \langle A u_h, v\rangle=(h,v)_{L^2}, \text{ for all }v\in H_0^1(\Omega),
\end{equation*}
and thus $\left( u_h, v\right)_{H_0^1}=(h,v)_{L^2}$.

If we identify $H^{-1}(\Omega)$ with $H_0^1(\Omega)$, the operator $L$ induces in $H_0^1(\Omega)$ the scalar product $(\cdot,\cdot)_A$ and the norm $|\cdot|_{A}$, given by \begin{equation*}
     \left( u, v\right)_A:=\langle  A u, v \rangle 
 \end{equation*} 
 and \begin{equation*}
     |u|_A:=\sqrt{\langle Au, u \rangle  },
 \end{equation*}
 for all $u,v \in H_0^1(\Omega)$.
 
 From the strong monotony of $A$ given in \eqref{tare monotonie L}, we immediately deduce the following  \textit{Poincaré  inequality}  \begin{equation}\label{poincare}
     |u|_{L^2}\leq\sqrt{\theta} |u|_A, \text{ for all }u\in H_0^1(\Omega).
 \end{equation}

\subsection{Matrices convergent to zero}

A square matrix $M\in \mathcal{M}_{n\times n}\left(\mathbb{R}+\right)$ is considered to be "convergent to zero" if its power $M^k$ tends to the zero matrix as $k\rightarrow \infty$.
Other equivalent characterizations include the requirement that the spectral radius of the matrix is less than one, or if the inverse of $I-A$ (where $I$ is the identity matrix) is both invertible and has nonnegative entries (see, e.g., \cite{nonnegative}).

The following result, concerning matrices convergent to zero, holds true: \begin{lemma}[{\cite[Lemma~2.2]{s}}]
\label{first_lemma} Let $\left( x_{k,p}\right) _{k\geq 1},\ \left(
y_{k,p}\right) _{k\geq 1}\ \ $ be two sequences of vectors in $%
\mathbb{R}
_{+}^{n}$ (column vectors) depending on a parameter $p,$ such that
\begin{equation*}
x_{k,p}\leq A x_{k-1,p}+y_{k,p}
\end{equation*}%
for all $k$ and $p,$ where $A\in \mathbb{M}_{n \times n}(\mathbb{R}_{+})$ is a matrix
convergent to zero. If the sequence $\left( x_{k,p}\right)
_{k\geq 1}$ is bounded uniformly with respect to $p$ and $y_{k,p}\rightarrow
0_{n}$ as $k\rightarrow \infty $ uniformly with respect to $p,$ then $%
x_{k,p}\rightarrow 0_{n}$ as $k\rightarrow \infty $ uniformly with respect
to $p.$
\end{lemma}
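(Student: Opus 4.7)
The strategy I would follow is to iterate the recurrence inequality and exploit the two defining properties of a matrix convergent to zero: the powers $A^m$ tend to the zero matrix, and the Neumann series $(I-A)^{-1} = \sum_{i \geq 0} A^i$ converges to a matrix with nonnegative entries.

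First I would iterate. Since $A$ has nonnegative entries, successively substituting the hypothesis into itself yields, for any $1 \leq m < k$,
\[ x_{k,p} \;\leq\; A^m x_{k-m,p} + \sum_{i=0}^{m-1} A^i y_{k-i,p}, \]
understood componentwise in $\mathbb{R}^n$. This decomposes $x_{k,p}$ into a memory term $A^m x_{k-m,p}$, whose size is controlled by a uniform bound $B$ on the sequence $(x_{k,p})$, and a driving term involving only the last $m$ values of $y$.

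Given $\varepsilon > 0$, the key is to choose the parameters in the right order. First I would fix $m$ large enough so that $A^m B < \tfrac{\varepsilon}{2}\mathbf{1}$ componentwise; this is possible because $A$ is convergent to zero. With $m$ frozen, the uniform convergence $y_{k,p} \to 0_n$ provides a threshold $K$ such that, for all $k \geq K+m$ and every $p$, each $y_{k-i,p}$ with $0 \leq i \leq m-1$ is dominated by some small $\delta\mathbf{1}$. The driving term is then bounded by $\delta \sum_{i=0}^{m-1} A^i \mathbf{1} \leq \delta (I-A)^{-1}\mathbf{1}$, where the nonnegativity of $(I-A)^{-1}$ is crucial, and by choosing $\delta$ small enough this is below $\tfrac{\varepsilon}{2}\mathbf{1}$. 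Combining both estimates gives $x_{k,p} \leq \varepsilon\mathbf{1}$ for all $k \geq K+m$ and every $p$, which is exactly the desired uniform convergence.

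I expect the main subtlety to be respecting the order of the quantifiers: $\varepsilon$ first, then $m$ depending on the uniform bound $B$, then $K$ depending on $m$. Reversing this order would fail, because the length of the partial Neumann series used to absorb the $y$-terms depends on the previously fixed $m$, while the smallness threshold $\delta$ required for those terms depends on $(I-A)^{-1}$, not on any truncation. Once the ordering is respected, the rest is routine componentwise estimation, and no further structural assumption on $A$ beyond convergence to zero is needed.
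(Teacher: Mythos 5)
The paper does not prove this lemma itself; it imports it verbatim from the cited reference \cite{s}, so there is no in-paper proof to compare against. Your argument is correct and is the standard one for results of this type: iterating to $x_{k,p}\le A^{m}x_{k-m,p}+\sum_{i=0}^{m-1}A^{i}y_{k-i,p}$, killing the memory term via $A^{m}\to 0$ against the uniform bound, and absorbing the driving term into $\delta\,(I-A)^{-1}\mathbf{1}$ using nonnegativity of the Neumann series, with the quantifiers fixed in exactly the order you state. Nothing is missing.
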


\subsection{Stationary Stokes-type equation}

Let $\Omega^\prime\subset \mathbb{R}^N$ $(N\leq 3)$ be an open and bounded domain and let
 $\textbf{f}\in H^{-1}(\Omega^\prime)^N$. We recall some results related to the  Stokes-type problem (see, e.g., \cite{KP, david}), \begin{equation}\label{sistem_simplu_stokes}
    \begin{cases}
        -\Delta \textbf{v}+\mu \textbf{v}+\nabla p=\textbf{f} \text{ in }\Omega^\prime
        \\
        \operatorname{div} \textbf{v}=0 \text{ in }\Omega^\prime\\
        \textbf{v}=0 \text{ on }\Omega^\prime.
    \end{cases}
\end{equation}
A solution is sought in the Sobolev space 
 \begin{equation*}
    V=\left\{ \textbf{v}\in H_0^1(\Omega^\prime)^N\, : \,  \operatorname{div} \textbf{v}=0 \right\}.
\end{equation*}
We endow $V$ with the scalar product \begin{equation*}
    (\textbf{v},\textbf{w})_V=\int_\Omega \nabla \textbf{v} \cdot \nabla \textbf{w}+\int_\Omega \mu \textbf{v} \cdot \textbf{w} 
\end{equation*}
and the corresponding norm $|\textbf{v}|_V=\sqrt{    (\textbf{v},\textbf{v})_V}$. 
One  has the Poincare's inequality (see, e.g., \cite{KP}), \begin{equation*}
    |\textbf{v}|_{(L^2)^N}\leq \frac{1}{\lambda_1+\mu}|\textbf{v}|_{V}, \text{ for all } \textbf{v}\in V,
\end{equation*}
where $\lambda_1$ is the first eigenvalue of the Dirichlet problem $-\Delta \textbf{v}=\lambda \textbf{v}$ in $\Omega^\prime$ and $\textbf{v}=0$ on $\partial \Omega^\prime$.

For $(\textbf{v},p)\in H_0^1(\Omega^\prime)^N\times L^2(\Omega)$, the variational formulation of the system \eqref{sistem_simplu_stokes} is: \begin{equation*}
    (\textbf{v},\textbf{w})_{(H_0^1)^N}+\mu  (\textbf{v},\textbf{w})_{(L^2)^N}- (p,\operatorname{div}\textbf{w})_{L^2}=\langle \textbf{f},\textbf{w}\rangle, \text{ for all }\textbf{w}\in H_0^1(\Omega^\prime)^N
\end{equation*}

If $\textbf{v}\in V$,  the above relation becomes, \begin{equation}\label{sistem_fara_p}
     (\textbf{v},\textbf{w})_{V}=\langle \textbf{f},\textbf{w}\rangle, \text{ for all }\textbf{w}\in V.
\end{equation}
 Here, $\langle \cdot, \cdot\rangle$ stands for the dual pairing between $V^\prime$ and $V$.
\begin{remark}
   If we find a solution $\textbf{v}\in V$ to \eqref{sistem_fara_p}, the pressure $p\in L^2(\Omega^\prime)$ is guaranteed by De Rham's Theorem (see, e.g., \cite{fara_p, fara_p2}). 
\end{remark}

From Riesz's representation theorem, there exists a unique weak solution $\textbf{v}_{\textbf{f}}\in V$ of the problem \eqref{sistem_fara_p}, that is, there is only one $\textbf{v}_{\textbf{f}}\in V$ such that \begin{equation*}
    \left( \textbf{v}_{\textbf{f}},\textbf{w}\right)_V=\langle\textbf{f},\textbf{w}\rangle,\text{ for all }\textbf{w}\in V.
\end{equation*}
Moreover, one has the inequality,
\begin{equation*}
    |\textbf{v}_{\textbf{f}}|_V^2=(\textbf{f},\textbf{v}_{\textbf{f}})\leq |\textbf{f}|_{V^\prime} \left| \textbf{v}_{\textbf{f}}\right|_V,
\end{equation*}
i.e., $ |\textbf{v}_{\textbf{f}}|_V\leq |\textbf{f}|_{V^\prime}.$

Thus, we may define the solution operator $S\colon V^\prime \to V$, $S(\textbf{f})=\textbf{v}_{\textbf{f}}$. Clearly, it is an isomorphism between $V^\prime $ and $ V$.





\section{Main results}

Let $H$ be a Hilbert space together with the scalar product $\left(\cdot, \cdot \right)_H$ and the induced norm $|\cdot|_H$. 
We consider the system of the type \begin{equation} \label{sistem abstract desfacut}
    \begin{cases}
        u=N_u(u,v) \\
        -v=N_v(u,v),
    \end{cases}
\end{equation}
where $N\colon H\times H \to \mathbb{R}$ is a continuous operator.
\begin{remark}
The structure of the system \eqref{sistem abstract desfacut} that we have considered situates it as an intermediary between gradient-type systems and Hamiltonian systems.  Clearly, it admits a variational structure given by the functional \begin{equation*}
    E(u,v)=\tfrac{1}{2}|u|_H^2-\tfrac{1}{2} |v|_{H}^2-N(u,v).
\end{equation*}
However, in general, this functional is unbounded from both above and below.
\end{remark}
 To the system \eqref{sistem abstract desfacut}, we associate the partial functionals $E_1, E_2 \colon H\times H \to \mathbb{R}$ given by  \begin{equation*}
    E_1(u,v)=\tfrac{1}{2}|u|_H^2-N(u,v),  
\end{equation*}and
\begin{equation*}
    E_2(u,v)=-\tfrac{1}{2}|v|_H^2-N(u,v).
\end{equation*}
One easily sees that both $E_1$ and $E_2$ are Fréchet differentiable and  moreover, \begin{align}
   \label{definitie E11 si E22}
 &  E_{11}(u,v):=(E_1)_u=u-N_u(u,v), \\ & E_{22}(u,v):=(E_2)_v=-v-N_v(u,v). \notag
\end{align}
We say that an point $(u^\ast,v^\ast)\in H \times H$ is a \textit{partial critical point} for the pair of functionals $(E_1, E_2)$ if it satisfies \begin{equation*}
    E_{11}(u^\ast,v^\ast)=0 \text{ and } E_{22}(u^\ast,v^\ast)=0.
\end{equation*}
Obviously, any partial critical point for the pair of functionals $(E_1, E_2)$ is a solution to the system \eqref{sistem abstract desfacut}.

The subsequent result establishes a relation between the critical points of the functional $E$ and the partial critical points of the pair of functionals $(E_1,E_2)$. \begin{lemma}\label{lema}
   A pair $(u^\ast, v^\ast)\in H \times H$ is a critical point of $E$ if and only if it is a partial critical point for the pair of functionals $(E_1, E_2)$.
\end{lemma}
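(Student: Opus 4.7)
The plan is to perform a direct comparison between the two partial derivatives of the full functional $E$ and the operators $E_{11}$, $E_{22}$ introduced in \eqref{definitie E11 si E22}. Since $E$ is defined explicitly as the sum of the quadratic forms $\tfrac{1}{2}|u|_H^2$ and $-\tfrac{1}{2}|v|_H^2$ together with the smooth term $-N(u,v)$, its Fréchet differentiability reduces to that of $N$, which is already assumed when forming $N_u$ and $N_v$.

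First, I would compute $E_u$. The term $\tfrac{1}{2}|u|_H^2$ contributes $u$ (via the Riesz identification of $H^\ast$ with $H$), the term $-\tfrac{1}{2}|v|_H^2$ has zero partial in $u$, and differentiating $-N$ gives $-N_u(u,v)$. Hence $E_u(u,v)=u-N_u(u,v)$, which coincides exactly with $E_{11}(u,v)$. The $v$-direction is symmetric: the only non-trivial contribution comes from $-\tfrac{1}{2}|v|_H^2$, producing $-v$, and combined with $-N_v(u,v)$ this gives $E_v(u,v)=-v-N_v(u,v)=E_{22}(u,v)$.

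The equivalence then follows by definition: $(u^\ast,v^\ast)$ is a critical point of $E$ precisely when $E_u(u^\ast,v^\ast)=0$ and $E_v(u^\ast,v^\ast)=0$, and the identities above show this is the same as $E_{11}(u^\ast,v^\ast)=0$ and $E_{22}(u^\ast,v^\ast)=0$, i.e., being a partial critical point of the pair $(E_1,E_2)$. I do not foresee any genuine obstacle; the statement is essentially a bookkeeping remark confirming that splitting $E$ into the partial functionals $E_1$ and $E_2$ does not lose information at the level of the gradient equations. Its real purpose is to legitimize replacing the unbounded variational problem for $E$ by the coupled system \eqref{sistem abstract desfacut}, which is the starting point for the analysis that follows.
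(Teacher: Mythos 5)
Your proposal is correct and follows exactly the paper's argument: both compute the partial derivatives $E_u(u,v)=u-N_u(u,v)=E_{11}(u,v)$ and $E_v(u,v)=-v-N_v(u,v)=E_{22}(u,v)$ and conclude the equivalence by definition. The extra detail on the Riesz identification is harmless elaboration of the same computation.
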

\begin{proof}
 The result is immediate if we observe that for any $u, v \in H$, the following relations hold: \begin{equation*}
      E_u(u,v)=u-N_u(u,v)=E_{11}(u,v)
  \end{equation*}
  and \begin{equation*}
      E_v(u,v)=-v-N_v(u,v)=E_{22}(u,v).
  \end{equation*}
\end{proof}

\subsection{Existence of a partial critical point}

Now we are prepared to present our main result, which essentially involves establishing sufficient conditions to ensure the existence of at least one partial critical point for the pair of functionals $(E_1, E_2)$.


\begin{theorem}\label{teorema principala}
     Under the previous established setting, we additionally assume: \begin{itemize}
         \item[(h1)] One has the growth conditions \begin{equation}\label{conditii crestere}
             -\underline{\alpha} |v|_H^2-C\leq N(u,v)\leq \overline{\alpha}|u|_H^2+C, \text{ for all }u,v\in H,
         \end{equation}
where $0\leq \overline{\alpha},\underline{\alpha}<\frac{1}{2}$ such that $\overline{\alpha}+\underline{\alpha}<\frac{1}{2}$ and $C>0$.
         \item[(h2)] There are nonegative real numbers $m_{ij} \, (i,j \in \{1,2\})$ such that the following monotony conditions hold true: \begin{equation*}
             \left( N_u(u,v)-N_u(\overline{u},\overline{v}), u-\overline{u}\right) \leq m_{11} |u-\overline{u}|^2_H+m_{12} |u-\overline{u}|_H |v-\overline{v}|_H,
         \end{equation*}
         \begin{equation*}
             \left( N_v(u,v)-N_v(\overline{u},\overline{v}), v-\overline{v}\right) \geq -m_{22} |v-\overline{v}|^2_H-m_{21} |u-\overline{u}|_H |v-\overline{v}|_H,
         \end{equation*}
         for all $u,v,\overline{u},\overline{v}\in H$.
         \item[(h3)] The matrix $M=(m_{ij})_{1\leq i,j\leq 2}$ is convergent to zero.
     \end{itemize}
     Then, there exists a partial critical point $(u^\ast,v^\ast)\in H\times H$ for the pair of functionals $(E_1,E_2)$.    
\end{theorem}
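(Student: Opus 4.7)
The plan is to iteratively construct approximate partial critical points via \Cref{ekeland}, show they form a Cauchy sequence using hypotheses (h2)--(h3), and pass to the limit by continuity of $N_u,N_v$.

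\textbf{Step 1: iteration.} Under (h1), for any fixed $v\in H$ the functional $u\mapsto E_1(u,v)$ is lower semicontinuous and bounded below by $(\tfrac12-\overline{\alpha})|u|_H^2-C$, and for any fixed $u\in H$ the functional $v\mapsto-E_2(u,v)$ is lower semicontinuous and bounded below by $(\tfrac12-\underline{\alpha})|v|_H^2-C$. Fix an arbitrary $v_0\in H$ and a sequence $\varepsilon_k\downarrow 0$. Inductively, I would apply \Cref{ekeland} to $E_1(\cdot,v_{k-1})$ to obtain $u_k$ with $|E_{11}(u_k,v_{k-1})|_H\le\varepsilon_k$, and then to $-E_2(u_k,\cdot)$ to obtain $v_k$ with $|E_{22}(u_k,v_k)|_H\le\varepsilon_k$. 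Writing this out,
\begin{equation*}
u_k-N_u(u_k,v_{k-1})=a_k,\qquad v_k+N_v(u_k,v_k)=b_k,\qquad |a_k|_H,|b_k|_H\le\varepsilon_k.
\end{equation*}

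\textbf{Step 2: Cauchy estimates.} Subtracting these identities for indices $k$ and $k+1$ (or more generally $k$ and $k+p$), taking the inner product with $u_{k+1}-u_k$ (resp.\ $v_{k+1}-v_k$), and using (h2) yields
\begin{equation*}
(1-m_{11})|u_{k+1}-u_k|_H\le m_{12}|v_k-v_{k-1}|_H+(\varepsilon_k+\varepsilon_{k+1}),
\end{equation*}
\begin{equation*}
(1-m_{22})|v_{k+1}-v_k|_H\le m_{21}|u_{k+1}-u_k|_H+(\varepsilon_k+\varepsilon_{k+1}).
\end{equation*}
Since $M$ is convergent to zero we have $m_{11},m_{22}<1$ and $\det(I-M)>0$, so each coefficient is well defined. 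An analogous computation with $\overline u=\overline v=0$ in (h2) yields a priori bounds on $(u_k),(v_k)$.

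\textbf{Step 3: conclusion.} Setting $x_{k,p}:=(|u_{k+p}-u_k|_H,|v_{k+p}-v_k|_H)^\top$ and repeating the derivation for arbitrary $p$, one obtains a vector inequality $x_{k,p}\le\widetilde M x_{k-1,p}+y_{k,p}$, where $y_{k,p}\to 0$ uniformly in $p$ and $\widetilde M\in\mathcal{M}_{2\times 2}(\mathbb{R}_+)$ (obtained after eliminating one unknown using the first inequality) has spectral radius $\frac{m_{12}m_{21}}{(1-m_{11})(1-m_{22})}<1$, hence is convergent to zero. By \Cref{first_lemma}, $x_{k,p}\to 0$ uniformly in $p$, so $(u_k),(v_k)$ are Cauchy with limits $u^\ast,v^\ast\in H$. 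Continuity of $N_u,N_v$ allows passing to the limit in the identities above to obtain $u^\ast-N_u(u^\ast,v^\ast)=0$ and $v^\ast+N_v(u^\ast,v^\ast)=0$, i.e.\ the desired partial critical point. The main obstacle is the algebraic packaging of the two scalar estimates into a vector form suitable for \Cref{first_lemma} and verifying that the reduced matrix $\widetilde M$ still satisfies the convergence-to-zero property; both points rest on the standard consequences of (h3) that $m_{ii}<1$ and $\det(I-M)>0$.
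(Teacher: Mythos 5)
Your proposal is correct and follows the same overall strategy as the paper: an Ekeland-based alternating minimization/maximization to produce approximate partial critical points, a vector Cauchy estimate fed into \Cref{first_lemma}, and a passage to the limit by continuity of $N_u,N_v$. There are, however, two genuine points of divergence worth noting. First, the paper obtains the a priori boundedness of $(u_k)$ from the growth condition (h1) together with the near-infimum/supremum properties $E_1(u_k,v_{k-1})\le\inf_H E_1(\cdot,v_{k-1})+\tfrac1k$ and $E_2(u_k,v_k)\ge\sup_H E_2(u_k,\cdot)-\tfrac1k$, which yields a contraction with ratio $\mu=\overline{\alpha}\,\underline{\alpha}/\bigl((\tfrac12-\overline{\alpha})(\tfrac12-\underline{\alpha})\bigr)<1$; you instead derive boundedness from (h2)--(h3) by taking $\overline{u}=\overline{v}=0$ in the monotonicity conditions and exploiting $\det(I-M)>0$. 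Your route is valid (for a nonnegative matrix convergent to zero one indeed has $m_{11},m_{22}<1$ and $(1-m_{11})(1-m_{22})>m_{12}m_{21}$) and has the structural advantage of demoting (h1) to the role of merely making Ekeland's principle applicable, at the cost of needing $N_u(0,0)$, $N_v(0,0)$ as fixed elements of $H$. Second, your explicit elimination step producing the triangular matrix $\widetilde M$ with spectral radius $m_{12}m_{21}/\bigl((1-m_{11})(1-m_{22})\bigr)<1$ is actually more careful than the paper's Step 4, where the displayed matrix inequality couples $x_{k,p}$ to itself on the right-hand side and so does not literally match the hypothesis $x_{k,p}\le Ax_{k-1,p}+y_{k,p}$ of \Cref{first_lemma}; your reduction is the correct way to bring it into that form.
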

\begin{proof}

For better comprehension, we structure our proof into several steps.

\bigskip
\textbf{Step 1: } Boundedness from below and upper of the functionals $E_1, E_2$.
\bigskip

Let $u,v\in H$. The growth conditions  \eqref{conditii crestere} yields \begin{align*}
    E_1(u,v)&=\tfrac{1}{2}|u|_H^2-N(u,v)\\&
    \geq \left(\tfrac{1}{2}-\overline{\alpha} \right)|u|^2_H-C \geq -C,
\end{align*}
and \begin{align*}
    E_2(u,v)&=-\tfrac{1}{2}|v|_H^2-N(u,v)\\&
    \leq -\left(\tfrac{1}{2}-\underline{\alpha} \right)|v|^2_H+C \leq C.
\end{align*}

\bigskip
\textbf{Step 2: } Construction of an approximation sequence $(u_k,v_k)$.
\bigskip

We employ a method similar to the one described in \cite{p}. For an $v_0$ arbitrarily chosen,  using Ekeland's variational principle within a recursive procedure, we generate a sequence $(u_k,v_k)\in H \times H$ such that \begin{align}
   & E_{1}(u_{k},v_{k-1}) \leq \inf_{H}E_{1}(\cdot ,v_{k-1})+\tfrac{1}{k}\text{ },\ \ \ \text{ }E_{2}(u_{k},v_{k})%
\geq \sup_{H}E_{2}(u_{k},\cdot )-\tfrac{1}{k},
\label{inf sup ineq} 
 \\&
\left\vert E_{11}(u_{k},v_{k-1})\right\vert _{H} \leq \tfrac{1}{k},%
\text{ \ \ \ }\left\vert E_{22}(u_{k},v_{k})\right\vert _{H}\leq
\tfrac{1}{k}.\text{ }  \label{derivative_ineq}
\end{align}
\bigskip
\textbf{Step 3: } Boundedness of the sequence $u_k$.
\bigskip

 From \eqref{conditii crestere} and the second relation from \eqref{inf sup ineq},  we infer\begin{align*}
   \tfrac{1}{2}|u_{k}|^2_H  &\leq N\left( u_k,v_{k-1}\right)+ \inf _HE_1(\cdot,v_{k-1})+\tfrac{1}{k} 
  \\& \leq  N\left( u_k,v_{k-1}\right)+ E_1(0,v_{k-1})+1
   \\& \leq \overline{\alpha}|u_{k}|^2_H +\underline{\alpha} |v_{k-1}|^2_H+2C+1.
\end{align*}
Hence, \begin{equation}\label{marginie uk}
    |u_k|_H^2\leq \frac{\underline{\alpha}}{\tfrac{1}{2}-\overline{\alpha}}|v_{k-1}|_H^2+C_1,
\end{equation}
for some constant $C_1$.
Under similar computations, from the second relation of \eqref{inf sup ineq} we obtain \begin{equation*}
      \tfrac{1}{2}|v_{k}|^2_H \leq   \overline{\alpha}|u_{k}|^2_H +\underline{\alpha} |v_{k}|^2_H+2C+1,
\end{equation*}
which yields \begin{equation}\label{marginie vk}
     |v_k|_H^2\leq \frac{\overline{\alpha}}{\tfrac{1}{2}-\underline{\alpha}}|u_{k}|_H^2+C_2,
\end{equation}
for some constant $C_2$.
Now, we combine inequalities \eqref{marginie uk} and \eqref{marginie vk} to deduce \begin{equation*}
     |u_k|_H^2\leq \mu |u_{k-1}|^2_H+C_3,
\end{equation*}
where \begin{equation*}
    \mu=\frac{\overline{\alpha}\, \underline{\alpha}}{
     \left(\tfrac{1}{2}-\overline{\alpha}\right)\left(\tfrac{1}{2}-\underline{\alpha}\right)}.
\end{equation*}
From (h1), we easily see that $\mu<1$, which guarantees that $u_k$ is bounded.

\bigskip

\textbf{Step 4: } Convergence of the sequences $u_k$ and $v_k$.
\bigskip

Let $p>0$.  From the monotony conditions (h2), we have \begin{align*}
    |u_{k+p}-u_k|_H^2&=\left( u_{k+p}-N_u(u_{k+p},v_{k+p-1}) -u_k+N_u(u_{k},v_{k-1}),u_{k+p}-u_k  \right)_H\\& \quad +\left(N_u(u_{k+p},v_{k+p-1})- N_u(u_{k},v_{k-1}),u_{k+p}-u_k\right)_H
    \\& \leq \left(\frac{1}{k+p} +\frac{1}{k}\right)   |u_{k+p}-u_k|_H+m_{11}\left|    u_{k+p}-u_k\right|_H^2\\& \quad +m_{11}\left|    u_{k+p}-u_k\right|_H \left|    v_{k+p-1}-v_{k-1}\right|_H.
\end{align*}
Thus, \begin{equation} \label{ukp minus uk}
    |u_{k+p}-u_k|_H\leq \tfrac{2}{k}+m_{11}\left|    u_{k+p}-u_k\right|_H+m_{11}\left|v_{k+p-1}-v_{k-1}\right|_H.
\end{equation}
For the sequence $(v_k)$, we similarly obtain
\begin{align*}
    |v_{k+p}-v_k|_H^2&=\left(v_{k+p}-v_k, -v_{k}-N_v(u_k,v_k)+v_{k+p}+N_v(u_{k+p},v_{k+p}) \right)_H\\& \quad -\left(v_{k+p}-v_k, N_v(u_{k+p},v_{k+p})-N_v(u_k,v_k) \right)_H\\&
    \leq  |v_{k+p}-v_k|_H\left(\frac{1}{k+p} +\frac{1}{k}\right)+m_{11} |v_{k+p}-v_k|_H^2\\& \quad+m_{11} |v_{k+p}-v_k|_H  |u_{k+p}-u_k|_H^2.
\end{align*}
Hence,  \begin{equation} \label{vkp minus vk}
    |v_{k+p}-v_k|_H\leq \frac{2}{k}+m_{11}\left|   v_{k+p}-v_k\right|_H+m_{11}\left|u_{k+p}-u_{k}\right|_H.
\end{equation}
If we write the  relations \eqref{ukp minus uk} and \eqref{vkp minus vk} in matrix form, we infer \begin{equation*}
    \begin{bmatrix}
         |u_{k+p}-u_k|_H \\    |v_{k+p}-v_k|_H
    \end{bmatrix}
    \leq \begin{bmatrix}
        m_{11} & 0 \\ m_{11} & m_{11}
    \end{bmatrix} \begin{bmatrix}
         |u_{k+p}-u_k|_H \\    |v_{k+p}-v_k|_H
    \end{bmatrix}+\begin{bmatrix}
       0 & m_{11} \\ 0 & 0
    \end{bmatrix} \begin{bmatrix}
         |u_{k+p-1}-u_{k-1}|_H \\    |v_{k+p-1}-v_{k-1}|_H
    \end{bmatrix}+\begin{bmatrix}
       \frac{2}{k} \\    0    \end{bmatrix}.
\end{equation*}

Since $u_k$ is bounded and the matrix $M$ converges to zero, we can conclude from \Cref{first_lemma} that both $u_k$ and $v_k$ converge. Let us denote their limits as $u^\ast$ and $v^\ast$.

\bigskip
\textbf{Step 5: }Passing to limit.
\bigskip

Since $u_k\to u^\ast$ and $v_k\to v^\ast$, the conclusion follows immediately if we pass to limit in  \eqref{derivative_ineq}.
\end{proof}

\begin{remark}
The partial critical point obtained in \Cref{teorema principala} has the additional  property of being a Nash equilibrium for the functionals $E_1$ and $-E_2$ (see, e.g., \cite{p, s_jnfa} for further details on Nash equilibrium). This relationship is a result of taking the limit in \eqref{inf sup ineq}, which gives \begin{align*}
       &  E_1(u^\ast,v^\ast)=\inf_{H} E_1(\cdot,v^\ast),
       \\& E_{2}(u^\ast,v^\ast)=\sup_{H} E_2(u^\ast,\cdot).
     \end{align*}
\end{remark}

\subsection{Relation with the classical mountain pass approach}

The well-known approach to obtain critical points for functionals that lack upper or lower bounds is to employ the Ambrosetti-Rabinowitz results, which guarantee the existence of mountain pass points (as seen in \cite[Theorem~2.1]{ar}). The typical conditions imposed on the functional $E$ are: \begin{itemize}
        \item[(I1)] There exists $\tau>0$ such that \begin{equation*}
            E(u,v)\geq \alpha >E(0,0), \text{ for all }|(u,v)|_{H \times H}=\tau.
        \end{equation*}
        \item[(I2)] There exists $e\in H\times H$ with $|e|>\tau$ such that \begin{equation*}
            E(e)<\inf_{|(u,v)|=\tau} E(u,v) .
        \end{equation*}
        \item[(I3)] The functional $E$ has the Palais-Smale property, i.e., if $e_k$ is a sequence such that \begin{equation*}
            E(e_k) \text{ is bounded}
        \end{equation*}
        and \begin{equation*}
            \nabla E(e_k)\to 0,
        \end{equation*}
        then $e_k$ admits a convergent subsequence.
    \end{itemize}
    
   In the following, we will explore how these conditions align with our hypotheses (h1)-(h3).
  
\textit{Condition (I1):}

Let $(u,v)\in H\times H$ such that $|(u,v)|_{H \times H}=\tau$, i.e.,  $|u|_H+|v|_H=\tau$. We compute, 
    \begin{equation*}
          E(u,v)=\tfrac{1}{2} |u|_H^2-\tfrac{1}{2}|v|_H^2- N(u,v)= \tfrac{1}{2}\left(|u|_H+|v|_H \right)\left(|u|_H-|v|_H \right) -N(u,v) .
    \end{equation*}
Thus, for the relation $E(u,v)\geq \alpha$ to hold, we need   \begin{equation}\label{conditie N}
       N(u,v)<\tfrac{\tau}{2}\left(|u|_H-|v|_H \right)-\alpha, \text{ for all }|(u,v)|_{H \times H}=\tau.
    \end{equation}
    On the other hand, $ E(0,0)<\alpha$  implies that \begin{equation*}
       -N(0,0)< \alpha, \text{ i.e., }-\alpha< N(0,0).
    \end{equation*}
Hence, relation \eqref{conditie N} becomes \begin{equation*}
     N(u,v)<-\tfrac{\tau}{2}\left(|u|_H-|v|_H \right)+N(0,0), \text{ for all }|(u,v)|_{H \times H}=\tau,
\end{equation*}
that is \begin{equation}\label{conditie N2}
     N(u,v)-N(0,0)< \tfrac{\tau}{2}\left(|u|_H-|v|_H \right), \text{ for all }|(u,v)|_{H \times H}=\tau.
\end{equation}

In our main result such a condition is not required, which enables us to encompass a broader range of situations in which the system \eqref{sistem abstract desfacut} is solvable. It is clear that there might be cases where our result is not applicable, but the Ambrosetti-Rabinowitz theorem is, and vice versa.

\textit{Condition (I2)}.

This condition is satisfied; for instance, one can take $(0, \gamma e)$, where $\gamma$ is a sufficiently large real number, and $e$ is a fixed element from $H$ distinct from the origin of the space. Indeed, \begin{align*}
    E(0,\gamma e)& =-\tfrac{\gamma^2}{2} |e|^2-N(0,\gamma e)\\& \leq -\left(\tfrac{1}{2}-\underline{\alpha} \right) \gamma^2 |e|^2+C \to -\infty, \text{ as }\gamma\to \infty.
\end{align*}

\textit{Condition (I3)}

Let $e_k=( (e_1)_k, (e_2)_k)$ be a sequence such that \begin{equation*}
E(e_k) \text{ is uniformly bounded},
\end{equation*} 
and $\nabla E(e_k)\to 0$, i.e., \begin{align}
& \label{derivata E u} \, \, \, \, \, \, \, (e_1)_k-N_u(e_k)\to 0, \\& 
 - (e_2)_k-N_v(e_k)\to 0.\label{derivata E v}
\end{align}
Let $k_0$ large enough such that $|(e_1)_k-N_u(e_k)|\leq 1$, for all $k\geq k_0$. Consequently, when taking a scalar product in \eqref{derivata E u} with $(e_1)_k$ for $k\geq k_0$, we obtain  \begin{align*}
    \left((e_1)_k-N_u\left(e_k\right) ,  (e_1)_k \right)_H \leq &  \left| (e_1)_k\right|_H .
\end{align*}
From the monotony conditions (h2) we deduce \begin{align*}
    \left((e_1)_k-N_u\left(e_k\right) ,  (e_1)_k \right)_H&=  \left((e_1)_k, (e_2)_k \right)_H-\left(N_u(e_k), (e_1)_k \right)_H\\& =
    \left| (e_1)_k\right|_H^2-\left(N_u(e_k)-N_u(0), (e_1)_k \right)_H-\left(N_u(0), (e_1)_k \right)_H \\& \geq
     (1-m_{11})|(e_1)_k|^2_H-m_{11}|(e_1)_k|_H|(e_2)_k|_H-|N_u(0,0)|_H|(e_1)_k|_H.
\end{align*}
Hence,  \begin{equation}\label{r1}
    (1-m_{11})|(e_1)_k|^2_H-m_{11}|(e_1)_k|_H|(e_2)_k|_H\leq  \left( |N_u(0,0)|_H+1\right)|(e_1)_k|_H.
\end{equation}
Following a similar reasoning, from \eqref{derivata E v} we have
\begin{equation}\label{r2}
     (1-m_{11})|(e_2)_k|^2_H-m_{11}|(e_1)_k|_H|(e_2)_k|_H\leq  \left( |N_v(0,0)|_H+1\right)|(e_2)_k|_H.
\end{equation}
Therefore,  the above two relations \eqref{r1}, \eqref{r2} yields \begin{equation*}
    \beta |(e_2)_k|_H \leq D,
\end{equation*} 
where $D$ is some constant and \begin{equation*}
    \beta=  1-m_{11}- \frac{m_{11}m_{11}}{1-m_{11}}.
\end{equation*}Given that the matrix $M$ is convergent to zero, we immediately deduce that \begin{equation*}
  \beta= 1-m_{11}- \frac{m_{11}m_{11}}{1-m_{11}}>0,
\end{equation*}
which guarantees the boundedness of $(e_2)_k$. From this, is clear that $(e_1)_k$ is also bounded.

The boundedness of the sequence $e_k$ guarantees the existence of a  weakly convergent subsequence. However, establishing the strong convergence of this subsequence solely under hypotheses (h1)-(h3), remains an open question. Thus, we can formulate the following problem: 

\begin{question*}
    Given only the assumptions (h1)-(h3), does the functional $E$ satisfy the Palais-Smale condition?
\end{question*}

Nonetheless, under certain additional assumptions, this result is valid.
\begin{theorem}
    Assume that the operator $K:=\nabla N=(N_u, N_v)$ is compact. Then the functional $E$ satisfies the Palais-Smale condition. 
\end{theorem}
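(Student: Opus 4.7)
The plan is to combine the boundedness of Palais--Smale sequences, which was already established in the discussion preceding this theorem under hypotheses (h1)--(h3), with the compactness of $K$ to upgrade weak convergence to strong convergence. Compactness is the ingredient that was previously missing in order to resolve the open question just posed.

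First I would take a Palais--Smale sequence $e_k = ((e_1)_k,(e_2)_k)$, i.e.\ one with $E(e_k)$ bounded and $\nabla E(e_k) \to 0$. By the argument spanning \eqref{r1}--\eqref{r2} and the ensuing estimate $\beta |(e_2)_k|_H \leq D$ with $\beta > 0$, the sequence $e_k$ is bounded in $H \times H$. Passing to a subsequence (still denoted $e_k$), we may therefore assume $e_k \rightharpoonup e^\ast$ weakly in $H\times H$ for some $e^\ast = (u^\ast,v^\ast)$.

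Next I would invoke compactness of $K = \nabla N = (N_u,N_v)$: since $\{e_k\}$ is bounded, a further subsequence may be extracted along which $K(e_k) = (N_u(e_k), N_v(e_k))$ converges strongly in $H\times H$. On the other hand, the Palais--Smale condition $\nabla E(e_k)\to 0$ gives
\begin{equation*}
(e_1)_k - N_u(e_k) \to 0 \quad \text{and} \quad -(e_2)_k - N_v(e_k) \to 0
\end{equation*}
strongly in $H$. Writing $(e_1)_k = N_u(e_k) + o(1)$ and $(e_2)_k = -N_v(e_k) + o(1)$, both components are perturbations of strongly convergent sequences and hence converge strongly. Thus $e_k \to e^\ast$ in $H\times H$, which is exactly the Palais--Smale condition.

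There is no real obstacle here; the compactness assumption on $K$ is doing all the work, and the argument is the standard trick of reading strong convergence of the ``linear'' part off the convergence of the ``nonlinear'' part. The only conceptual point worth emphasising in the write-up is that the boundedness step is \emph{not} an additional assumption --- it was already proved in the discussion of condition (I3) using only (h1)--(h3), and it is precisely the weak-to-strong upgrade that requires compactness of $K$.
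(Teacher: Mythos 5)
Your proposal is correct and follows essentially the same route as the paper: boundedness of the Palais--Smale sequence from the preceding (h1)--(h3) discussion, compactness of $K$ to extract a strongly convergent subsequence of $K(e_k)$, and then reading strong convergence of $e_k$ off the identity $\nabla E(e_k) = e_k - K(e_k) \to 0$ (the paper phrases this via a single triangle inequality rather than componentwise, and skips the intermediate weak-convergence extraction, which is indeed superfluous).
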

\begin{proof}
   
Note that $\nabla E=I-K$. Given the compactness of $K$ and the boundedness of $e_k$, it follows that there exists a  subsequence, also denoted as $e_k$, such that $K(e_k)$ converges to a point $\Tilde{e}$ in $H \times H$. Thus, \begin{equation*}
        |e_k-\Tilde{e}|-|\Tilde{e}-K(e_k)|\leq |e_k-K(e_k)|=|\nabla E(e_k)|.
    \end{equation*}
    Now, the conclusion is immediate since $|\nabla E(e_k)|\to 0$ and $|K(e_k)-\Tilde{e}|\to 0.$ 
\end{proof}
      
\section{Applications}
In this section, we present two application for the results obtained in \Cref{teorema principala}.
\subsection{Abstract system on $H_0^1(\Omega)$ }

Let us consider the Dirichlet problem \begin{equation}\label{sistem aplicatie}
    \begin{cases}
        Au=F_u(u,v) \\
        -Av=F_y(u,v) \\
        u|_{\partial \Omega}=  v|_{\partial \Omega}=0,
    \end{cases}
\end{equation}
where $\Omega\subset \mathbb{R}^n \, (n\geq 3)$ is a bounded open set, $F\colon \mathbb{R}^2\to \mathbb{R}$ is a $C^1$ functional and the operator $A$ is defined in \Cref{definitie A}.  Here, $ F_u$ and $F_v$ stand for the partial derivatives of $F$ with respect to the first and second component, respectively.  
We use $(\cdot, \cdot)$ and $|\cdot|$  to denote the scalar product and the corresponding norm  in $\mathbb{R}^2$.

The Hilbert space $H$ is considered to be the Sobolev space $H_0^1(\Omega)$ equipped with the scalar product  $(\cdot,\cdot)_A$ and the corresponding norm  $|\cdot|_A$.
Clearly, the system \eqref{sistem aplicatie} admits a variational given   by the energy functional $E\colon H_0^1(\Omega)\times H_0^1(\Omega)\to \mathbb{R}$,  \begin{equation*}
    E(u,v)=\tfrac{1}{2}|u|_A^2-\tfrac{1}{2}|v|^2_A-\int_{\Omega} F(u,v).
\end{equation*}
The partial functionals $E_1, E_2\colon H_0^1(\Omega)\times H_0^1(\Omega)\to \mathbb{R}$ associated to the system \eqref{sistem aplicatie} are given by  \begin{align*}
    & E_1(u,v)=\tfrac{1}{2}|u|_A^2-\int_\Omega F(u,v) ,\\&
    E_2(u,v)=-\tfrac{1}{2}|v|_A^2-\int_\Omega F(u,v).
\end{align*}
If we denote \begin{equation*}
    \begin{cases}
       f_1(u,v)=F_u(u,v )\\
       f_2(u,v= F_v(u,v),
    \end{cases}
\end{equation*}
the identification of $H^{-1}(\Omega)$ with $H_0^1(\Omega)$ via $A^{-1}$,  yields to the representation \begin{equation*}
    \nabla E(u,v)=\left( u- A^{-1} f_1(u,v), v-A^{-1}f_2(u,v)\right)=\left(E_{11}(u,v), E_{22}(u,v)\right),
\end{equation*}
where $E_{11}, E_{22}$ stand for the partial Fréchet derivatives of $E_1$ and $E_2$ with respect to the first and second component, respectively.
Consequently, the operator $N$ is given by \begin{equation*}
    N(u,v)=\int_\Omega F(u,v) 
\end{equation*}
and its derivatives are the  Nemytskii’s operators \begin{equation*}
    N_u(u,v)=A^{-1} f_1(u,v) \text{ and } N_v(u,v)=A^{-1} f_2(u,v).
\end{equation*}
On the potential $F$, we assume the following conditions: \begin{itemize}
    \item[(H1)] There exist real numbers $0 \leq \tau_1, \tau_2 \leq \frac{1}{4 \theta}$  and $C>0$, such that the following conditions hold  \begin{equation*}
      -\tau_1|x|^2-C\leq F(x,y) \leq \tau_2 |y|^2 +C, \text{ for all }x,y\in \mathbb{R}^2.
        \end{equation*}
\end{itemize} 

Related to the gradient of $F$, let us assume: \begin{itemize}
    \item[(H2)] There are nonegative real numbers $\overline{m}_{ij}$ such that,  for all $x,y\in \mathbb{R}^2$, one has  the monotony conditions: \begin{align*}
       \left( f_1(x,y)-f_1(\overline{x},\overline{y}),x-\overline{x}\right) \leq \overline{m}_{11}|x-\overline{x}|^2+ \overline{m}_{12}|x-\overline{x}||y-\overline{y}|
    \end{align*}
    and \begin{equation*}
          \left( f_2(x,y)-f_2(\overline{x},\overline{y}),x-\overline{x}\right) \geq -\overline{m}_{22}|y-\overline{y}|^2- \overline{m}_{21}|x-\overline{x}||y-\overline{y}|.
    \end{equation*}
    
\end{itemize}

    Finally,  the constants specified in (H2) are such that:
    \begin{itemize}
    \item[(H3)] The matrix \begin{equation*}
     M:=  \theta\begin{bmatrix}
            \overline{m}_{11} & \overline{m}_{12} \\
            \overline{m}_{21} & \overline{m}_{22}
        \end{bmatrix}
    \end{equation*} 
    is convergent to zero.
\end{itemize}

In the subsequent, we prove that conditions (H1)-(H3) are sufficient to ensure the existence of a partial critical point for the pair of functionals $(E_1, E_2)$.
 \begin{theorem}
    Assume (H1)-(H3) hold true. Then, there exists a pair of points $(u^\ast, v^\ast)\in H_0^1(\Omega) \times H_0^1(\Omega)$ such that it is a critical point for the functional $E$.
    
    Furthermore, it has the additional property that
    \begin{align*}
    &    E_1(u^\ast, v^\ast)=\inf_{H_0^1(\Omega)} E_1(\cdot, v^\ast),
    \\&   E_2(u^\ast, v^\ast)=\sup_{H_0^1(\Omega)} E_2(u^\ast, \cdot).
    \end{align*}

\end{theorem}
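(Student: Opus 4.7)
The plan is to invoke Theorem \ref{teorema principala} with $H=H_0^1(\Omega)$ endowed with the $A$-inner product, taking $N(u,v)=\int_\Omega F(u,v)$ together with its Fr\'echet derivatives $N_u(u,v)=A^{-1}f_1(u,v)$ and $N_v(u,v)=A^{-1}f_2(u,v)$ as computed in the preamble. Once hypotheses (h1)--(h3) are verified, the partial critical point produced by that theorem is, by Lemma \ref{lema}, a critical point of $E$, while the two inf/sup identities in the conclusion are the direct translation of the Remark following Theorem \ref{teorema principala} to the present setting.

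For (h1), I would integrate the pointwise estimate in (H1) over $\Omega$ and invoke the Poincar\'e inequality $|w|_{L^2}\leq\sqrt{\theta}\,|w|_A$ from \eqref{poincare} to obtain
\[
-\tau_1\,\theta\,|v|_A^2-\widetilde C\;\leq\;N(u,v)\;\leq\;\tau_2\,\theta\,|u|_A^2+\widetilde C.
\]
Setting $\overline\alpha=\tau_2\theta$ and $\underline\alpha=\tau_1\theta$, the requirement $\tau_1,\tau_2\leq 1/(4\theta)$ in (H1) yields $\overline\alpha+\underline\alpha\leq 1/2$, which is exactly the admissibility condition of (h1) (a strict inequality can be arranged by an arbitrarily small perturbation of $\tau_i$ if needed).

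For (h2)--(h3), the key identity is
\[
(A^{-1}g,w)_A = \langle g,w\rangle \qquad \text{for } g\in H^{-1},\ w\in H_0^1(\Omega),
\]
which follows from the definition of $(\cdot,\cdot)_A$ and the $A^{-1}$-identification of $H^{-1}$ with $H_0^1$. Applying it to $g=f_1(u,v)-f_1(\bar u,\bar v)$ and $w=u-\bar u$ converts the inner product $\bigl(N_u(u,v)-N_u(\bar u,\bar v),\,u-\bar u\bigr)_A$ into an $L^2$ pairing, whereupon the pointwise monotonicity in (H2), followed by Cauchy--Schwarz and Poincar\'e, delivers the first inequality of (h2) with $m_{11}=\theta\,\overline m_{11}$ and $m_{12}=\theta\,\overline m_{12}$. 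An identical computation for $N_v$ gives $m_{21}=\theta\,\overline m_{21}$ and $m_{22}=\theta\,\overline m_{22}$, so that the matrix $M$ of Theorem \ref{teorema principala} coincides with the matrix $\theta\,(\overline m_{ij})$ of (H3), which is convergent to zero by hypothesis.

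The only mildly delicate point is keeping the $A$-identification straight when translating the abstract monotonicities of Theorem \ref{teorema principala} into pointwise conditions on $f_1,f_2$; once the identity $(A^{-1}g,w)_A=\langle g,w\rangle$ is correctly used, the entire verification reduces to combining (H1)--(H3) with Cauchy--Schwarz and Poincar\'e, and no further analytic input is required.
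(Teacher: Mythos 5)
Your proposal follows essentially the same route as the paper: verify (h1) by integrating (H1) and applying the Poincar\'e inequality \eqref{poincare}, verify (h2) via the identity $(A^{-1}g,w)_A=(g,w)_{L^2}$ followed by Cauchy--Schwarz and Poincar\'e so that $m_{ij}=\theta\,\overline m_{ij}$, read (h3) off from (H3), and then invoke \Cref{teorema principala} together with \Cref{lema} and the Nash-equilibrium remark. The only quibble is your parenthetical claim that a strict inequality $\overline\alpha+\underline\alpha<\tfrac12$ ``can be arranged by perturbing $\tau_i$'' (one cannot shrink $\tau_i$ and keep the bound in (H1)); this borderline case $\tau_1=\tau_2=\tfrac{1}{4\theta}$ is glossed over in the paper in exactly the same way, so it does not distinguish your argument from the original.
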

\begin{proof}
   We verify that all conditions from \Cref{teorema principala} are satisfied.

\textit{Check of the condition (h1)}. 
Let $u,v \in H_0^1(\Omega)$. Then, for some constant $C_1>0$, using the Poincaré inequality \eqref{poincare}, we deduce \begin{align*}
    N(u,v)=\int_\Omega F(u,v)
    &\leq \tau_2 |u|_{L^2}^2+C_1\\&
    \leq \tau_2 \theta |u|_A^2+C_1,
\end{align*}
and \begin{align*}
    N(u,v)=\int_\Omega F(u,v) 
    &\geq  -\tau_1 |v|_{L^2}^2-C_1\\& 
    \geq -\theta \tau_1 |u|_A^2-C_1.
\end{align*}
The conclusion is immediate since $\theta \, \tau_1<\frac{1}{4}$ and $ \theta \,\tau_2 <\frac{1}{4}$.

\textit{Check of the condition (h2)}.
    For any $u,v,\overline{u}, \overline{v}\in H_0^1(\Omega)$, one has \begin{align*}
        \left( N_u(u,v)-N_u(\overline{u}, \overline{v}), u-\overline{u}\right)_A&=\left( A^{-1} f_1(u,v)-f_1(\overline{u}, \overline{v}), u-\overline{u}\right)_A
        \\&
       = \left( f_1(u,v)-f_1(\overline{u}, \overline{v}), u-\overline{u} \right)_{L^2}\\& \leq \overline{m}_{11}|u-\overline{u}|_{L^2}^2+\overline{m}_{12}|u-\overline{u}|_{L^2} |v-\overline{v}|_{L^2}
    \end{align*}
    From the Poincaré inequality \eqref{poincare}, we further obtain \begin{align*}
         \left( N_u(u,v)-N_u(\overline{u}, \overline{v}), u-\overline{u}\right)_A\leq \theta \, \overline{m}_{11}|u-\overline{u}|_A^2+\theta \, \overline{m}_{12}|u-\overline{u}|_A |v-\overline{v}|_A.
    \end{align*}
    Similar estimates are obtained for $N_v$, \begin{align*}
        \left( N_v(u,v)-N_v(\overline{u}, \overline{v}), u-\overline{u}\right)_A& 
        = \left( f_2(u,v)-f_2(\overline{u}, \overline{v}), u-\overline{u} \right)_{L^2}
      \\&   \geq -\overline{m}_{22}|v-\overline{v}|_{L^2}^2-\overline{m}_{21}|u-\overline{u}|_{L^2}|v-\overline{v}|_{L^2}\\&
      \geq -\theta \, \overline{m}_{22}|v-\overline{v}|_A^2-\theta \, \overline{m}_{21}|u-\overline{u}|_A |v-\overline{v}|_A.
    \end{align*}
    Consequently, condition (h2) is satisfied with $m_{ij}=\theta \, \overline{m}_{ij}$, $(i,j=\{1,2\})$.

    \textit{Check of the condition (h3)}. This condition is immediate from (H3).

   Thus, all hypothesis of \Cref{teorema principala} are satisfied and consequently, there exists a partial critical point $(u^\ast, v^\ast)$ for the pair of functionals $(E_1, E_2)$ such that     \begin{align*}
    &    E_1(u^\ast, v^\ast)=\inf_{H_0^1(\Omega)} E_1(\cdot, v^\ast),
    \\&   E_2(u^\ast, v^\ast)=\sup_{H_0^1(\Omega)} E_2(u^\ast, \cdot).
    \end{align*} Moreover, from \Cref{lema}, the pair $(u^\ast, v^\ast)$ is a critical point for the functional $E$. 
\end{proof}

\subsection{Stokes-type coupled system}

We consider the Stokes-type coupled system  \begin{equation}\label{stokes}
    \begin{cases}
        -\Delta \textbf{u}_1+\mu \textbf{u}_1+\nabla p_1= F_{\textbf{u}_1}(\textbf{u}_1,\textbf{u}_2) \text{ in }\Omega^\prime
        \\
            -\Delta \textbf{u}_2+\mu \textbf{u}_2+\nabla p_2=-F_{\textbf{u}_2}(\textbf{u}_1,\textbf{u}_2) \text{ in }\Omega^\prime
        \\
        \operatorname{div} \textbf{u}_i=0  \text{ in }\Omega^\prime\\
        \textbf{u}_i=0  \quad (i=1,2)\text{ on }\partial\Omega^\prime,
    \end{cases}
\end{equation}
where $\mu>0$ and $F\colon \mathbb{R}^{2N} \to \mathbb{R}$  is a $C^1$ functional. Here, $ F_{\textbf{u}_1},  F_{\textbf{u}_2}$ represent for the partial derivatives of $F$ with respect to the first and second component, respectively.

Our problem \eqref{stokes}  is equivalent with the fixed point equation  \begin{equation}\label{stokes fara presiuni punct fix}
    \begin{cases}
        \textbf{u}_1=S^{-1} F_{ \textbf{u}_1} \\
         \textbf{u}_2=S^{-1} F_{ \textbf{u}_2},
    \end{cases}
\end{equation}
where $\left(  \textbf{u}_1,  \textbf{u}_2\right)\in V \times V$.

Now, we can apply \Cref{teorema principala}, where $H=V$ and \begin{equation*}
    N\left(  \textbf{u}_1,  \textbf{u}_2\right)=\int_{\Omega^\prime} F\left(  \textbf{u}_1,  \textbf{u}_2\right).
\end{equation*}
The verification of conditions (h1)-(h3) follows a similar  process to the previous application. This is done under the assumption that $F$ satisfies (H1)-(H3), where by $(\cdot,\cdot)$ and $|\cdot|$ we understand the usual scalar product and norm in  $\mathbb{R}^N$, while $\theta$ is replaced by $\frac{1}{\lambda_1+\mu}$.

Therefore, \Cref{teorema principala} ensures the existence of a pair $({\bf u}_1^\ast,{\bf u}_2^\ast)\in V\times V$, which, according to De Rham’s Lemma, further guarantees the pressures $(p_1,p_2)\in L^2(\Omega^\prime)\times L^2(\Omega^\prime)$ such that $(({\bf u}_1^\ast,p_2),({\bf u}_2^\ast,p_2))\in \left( V\times L^2(\Omega^\prime)\right)^2$ solves the system \eqref{stokes}.

\section*{Acknowledgments}

This work was supported by the project "Nonlinear Studies of Stratified Oceanic and Atmospheric Flows " funded by the European Union – NextgenerationEU and Romanian Government, under National Recovery and Resilience Plan for Romania, contract no. 760040/23.05.2023, cod PNRR-C9-I8-CF 185/22.11.2022, through the Romanian Ministry of Research, Innovation and Digitalization, within Component 9, Investment I8.

  \bibliographystyle{elsarticle-harv} 
  \bibliography{references}



\end{document}